\newtheorem{proof}{Proof}
\newtheorem{proposition}{Proposition}
\newtheorem{lemma}{Lemma}
\newtheorem{remark}{Remark}
\def\begcen{\begin{center}}
\def\endcen{\end{center}}
\newcommand{\col}{ \mbox{col} }
\def\L1ac{${\cal L}_1$--AC}
\def\L1{{\cal L}_1}
\def\L2{{\cal L}_2}
\def\L2e{{\cal L}_{2e}}
\def\rea{\mathbb{R}}
\def\begequarr{\begin{eqnarray}}
\def\endequarr{\end{eqnarray}}
\def\begequarrs{\begin{eqnarray*}}
\def\endequarrs{\end{eqnarray*}}
\def\begarr{\begin{array}}
\def\endarr{\end{array}}
\def\begequ{\begin{equation}}
\def\endequ{\end{equation}}
\def\lab{\label}
\def\begdes{\begin{description}}
\def\enddes{\end{description}}
\def\begenu{\begin{enumerate}}
\def\begite{\begin{itemize}}
\def\endite{\end{itemize}}
\def\endenu{\end{enumerate}}
\def\lef[{\left[\begin{array}}
\def\rig]{\end{array}\right]}
\def\begcen{\begin{center}}
\def\endcen{\end{center}}
\def\begrem{\begin{remark}\rm}
\def\endrem{\end{remark}}
\def\l{\lambda}
\def\TAC{{\it IEEE Transactions of Automatic Control}}
\def\AUT{{\it Automatica}}
\def\IJC{{\it Int. J. of Control}}
\def\CSM{{\it IEEE Control Systems Magazine}}
\begin{document}

\begin{frontmatter}

\title{\Large \bf When is a Parameterized Controller Suitable for Adaptive Control?}
\author{Romeo Ortega and Elena Panteley\corref{cor1}}
\ead{\{ortega\}panteley@lss.supelec.fr}
\address{Laboratoire des Signaux et Syst\`emes, CNRS--SUPELEC, 91192
Gif--sur--Yvette, France.}

\begin{abstract}
In this paper we investigate when a parameterized controller, designed for a plant depending on unknown parameters, admits a realization which is {\em independent} of the parameters. It is argued
that adaptation is unnecessary for this class of parameterized controllers. We prove that standard model reference controllers (state and output--feedback) for linear time invariant systems with a
filter at the plant input admit a parameter independent realization. Although the addition of such a filter is of questionable interest, our result formally, and unquestionably, establishes the
deleterious effect of such a modification, which has been widely publicized in the control literature under the name ${\cal L}_1$--adaptive control.

\end{abstract}

\begin{keyword}
Adaptive control, model reference control, controller parameterizations.
\end{keyword}

\end{frontmatter}

%
\section{Introduction and Problem Formulation}
\lab{sec1}
%
The following question is addressed in this paper. Consider a linear time--invariant (LTI) parameterized plant
\begequarrs
\dot x & = & A(\theta) x + B(\theta)u\\
y & = & C(\theta) x,
\endequarrs
with state $x \in \rea^n$, $u\in \rea^m$, $y\in \rea^p$ the plant input and output, respectively, and $\theta \in \rea^q$ a vector of constant parameters; and an LTI parameterized controller
\begequarrs
\dot \xi & = & A_c(\theta) \xi + B_c(\theta)\lef[{c} y \\ r \rig]\\
u & = & C_c(\theta) \xi,
\endequarrs
with $\xi \in \rea^{n_\xi}$ and $r \in \rea^s$ some external reference. Their feedback interconnection generates a closed--loop transfer matrix
$$
Y(s)=P_{cl}(s,\theta)R(s).
$$
Under which conditions does there exists a partial change of coordinates
\begequ
\lab{chacoo}
\chi=T(\theta)\lef[{c} x \\  \xi \rig],
\endequ
with $T(\theta):\rea^{n_\xi \times (n+n_\xi)}$ such that the following holds true.
\begite
\item[(i)] The dynamics of the controller in the coordinates $\chi$ is {\em independent} of the parameters $\theta$.
\item[(ii)] The transfer matrix $P_{cl}(s,\theta)$ remains invariant.
\endite
A controller verifying these conditions is said to admit a parameter--independent realization or, in short, that it is a PIRC.

Our interest in this question stems from model reference adaptive control (MRAC) where it is assumed that the parameters of the plant $\theta$ are unknown but a parameterized controller that matches
a desired reference model is assumed to be known. The scheme is made adaptive replacing in the controller the unknown vector $\theta$ by an on--line estimate $\hat \theta$, which is generated via a
parameter identifier. The rationale behind this approach is, clearly, that if the estimated parameters $\hat \theta$ converge to their true value $\theta$ then---modulo some technical
conditions---the adaptive controller will achieve the control objective.\footnote{As is well--known \cite{SASBOD}, consistent parameter estimation is not necessary to achieve the control objective,
this is the so--called self--tuning property of adaptive control.}

From this perspective it is clear that for a PIRC there is no need to make the scheme adaptive! Indeed, we can simply plug--in the parameter--free controller, which will generate the ideal
closed--loop transfer matrix $P_{cl}(s,\theta)$, ensuring the control objective.

Interestingly, this very simple observation has been totally overlooked by the adaptive control community. In particular, a flood of publications---see \cite{HOVetal} and references therein---is
devoted to the analysis and design of ``adaptive" schemes, which turn out to be based on PIRC. The parameterized controller proposed in  \cite{HOVetal} consists of the addition of an LTI
filter to a classical state--feedback model reference controller (MRC). This simple modification to the state--feedback MRC is called  ``${\cal L}_1$--AC architecture" in \cite{HOVetal}.\\

The main contributions of the paper are the proofs of the following facts.
\begite
\item[R1] Standard state--feedback and output--feedback MRC are {\em not PIRC}.
\item[R2] Adding {\em any} LTI, strictly proper, input filter to the standard state--feedback MRC makes it a PIRC. Hence the ${\cal L}_1$--AC architecture is a PIRC.
\item[R3] Output--feedback MRC can be rendered a PIRC adding a {\em suitably chosen} LTI input filter.
\endite

The first result is rather obvious and, as seen below, the proof is straightforward. In view of the discussion above, the second result puts a serious question mark on the relevance of ${\cal L}_1$--AC. Since there
is no reason why we should like to add an input filter to output--feedback MRC the last fact is of little practical interest, but is given to underscore  the deleterious effect of adding input
filters to the plant.

In \cite{ortpan} R2 was established for the case of first--order plants with first order filters and a regulation objective, {\em i.e.}, $r$ constant. The generalization to $n$--th order plants was
reported in \cite{ortpancsm} for the case of stabilization, {\em i.e.}, $r=0$. To the best of our knowledge, these are the only two publications that have addressed the issues raised in this paper.

The remaining of the paper is organized as follows. State--feedback MRC, and it's filtered version, are studied in Section \ref{sec2}. Section \ref{sec3} is devoted to output--feedback MRC. The
analysis of state--feedback MRC is carried--out using state realizations of both, the plant and the controller. On the other hand, for the analysis of
output--feedback MRC it is more natural to use polynomial representations. We wrap--up the paper with some concluding remarks regarding the adaptive implementations of the various MRC in Section \ref{sec4}.
%
\section{State--feedback Model Reference Control}
\lab{sec2}
%
In its simplest version state--feedback MRC deals with single--input, LTI systems of the form
\begequ
\lab{pla}
\dot x = Ax + bu
\endequ
where $x\in \rea^n$ is assumed to be {\em measurable},
$$
A = \lef[{ccccc} 0 & 1 & 0 & \dots & 0  \\
                  0 & 0 & 1 & \dots & 0  \\
                   \vdots & \vdots & \vdots & \vdots & \vdots  \\
                    0 & 0 & 0 & \dots & 1  \\
                    -a_1 & -a_2 & -a_3 & \dots & -a_n
   \rig]
$$
where $a_i \in \rea,i\in \bar n:=\{1,\dots,n\}$ are unknown coefficients, and $b=e_n$---the $n$--th vector of the Euclidean basis.\footnote{This assumption is made to simplify the notation and
without loss of generality. See Remark R3 in \cite{ortpancsm}.} We are also given a reference model
$$
\dot x_m = A_m x_m + br
$$
where the state $x_m\in \rea^n$ and  $r\in\rea$ is a bounded reference, $A_m \in \rea^{n \times n}$ is the Hurwitz matrix
$$
A_m = \lef[{ccccc} 0 & 1 & 0 & \dots & 0  \\
                  0 & 0 & 1 & \dots & 0  \\
                   \vdots & \vdots & \vdots & \vdots & \vdots  \\
                      0 & 0 & 0 & \dots & 1  \\
                    -a^m_1 & -a^m_2 & -a^m_3 & \dots & -a^m_n
   \rig]
$$
with $a^m_i \in \rea_+,\;i\in \bar n$, designer chosen coefficients.

Defining the vector of {\em unknown} parameters
$$
\theta=\col(a_1 -a^m_1, a_2 -a^m_2,\dots, a_n-a^m_n),
$$
where $\col(\cdot)$ denotes column vector, it is clear that
\begequ
\lab{matequ}
A+b\theta^\top=A_m.
\endequ
Hence, invoking \eqref{matequ}, we can write \eqref{pla} in the equivalent parameterized system form
\begequ
\lab{sysx}
\dot x  =  (A_m- b \theta^\top) x +b  u.
\endequ
A parameterized controller that achieves the model matching objective is clearly
\begequ
\lab{pcsmrc}
u  =  \theta^\top x + r.
\endequ
This MRC is made adaptive adding an identifier that generates the estimated parameters, denoted $\hat \theta \in \rea^n$.

In \cite{HOVetal} it is proposed to {\em add a filter} at the plant input, that is, to compute $u$ via
$$
u=F(p)(\hat \theta^\top x + r)
$$
where $F(p)\in \rea(p)$, $p:={d \over dt}$, is strictly proper and stable. More precisely,
\begequ
\lab{fil}
F(p)={N_f(p)\over D_f(p)},
\endequ
where,
$$
D_f(p)  = \sum_{i=0}^{n_{D_f}}d_{fi} p^i,\; N_f(p)  = \sum_{i=0}^{n_{N_f}}n_{fi} p^i
$$
with $n_{D_f}>n_{N_f}$ and $D_f(p)$ and $N_f(p)$ are coprime with designer chosen coefficients. This simple modification to the state--feedback MRC is called  ``${\cal L}_1$--AC architecture" in
\cite{HOVetal}.

A state realization of the filtered state--feedback MRC is
\begequarr
\nonumber \dot \xi &= &A_f \xi + b_f (\theta^\top x + r)\\
u&=& c_f^\top \xi,
\lab{pcsl1}
\endequarr
where
$$
F(p)=c_f^\top(pI-A_f)^{-1}b_f,
$$
and $n_{\xi}=n_{D_f}$.\\

\begin{proposition}\em
\lab{pro1}
Consider the plant \eqref{sysx}. The classical state--feedback MRC \eqref{pcsmrc} is not a PIRC. On the other hand, for any filter \eqref{fil}, its filtered version \eqref{pcsl1} is a PIRC.
\end{proposition}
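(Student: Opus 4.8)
\emph{Proof plan.} For the first claim I would argue directly, exactly as the paper anticipates. The classical MRC \eqref{pcsmrc} is a \emph{static} control law, so the controller carries no state ($n_\xi=0$) and the partial change of coordinates \eqref{chacoo} is vacuous: there is simply nothing to transform. The only realization of the controller is the map $x\mapsto \theta^\top x + r$, whose gain $\theta^\top$ depends explicitly on $\theta$. Moreover, to reproduce the closed loop $\dot x = A_m x + br$ dictated by the plant \eqref{sysx}, any static gain $K$ in $u=Kx+r$ must satisfy $K=\theta^\top$, since otherwise $(sI-A_m+b\theta^\top-bK)^{-1}b$ still depends on $\theta$. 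Thus the required gain is irreducibly $\theta$-dependent and \eqref{pcsmrc} is not a PIRC; the contrast that motivates the rest is that the filter injects a state $\xi$ which supplies exactly the room needed to absorb this dependence.

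For the second claim the strategy is to exhibit an explicit coordinate change that eliminates $\theta$ from the realization \eqref{pcsl1}. The sole $\theta$-dependence there is the term $b_f\theta^\top x$ forcing $\xi$, so the whole game is to rewrite $\theta^\top x$ in terms of $\theta$-free signals. The key observation is that the plant itself furnishes such an expression. Since $b=e_n$ one has $e_n^\top b=1$, so the $n$-th row of \eqref{sysx} reads $\dot x_n = e_n^\top A_m x - \theta^\top x + u$, which combined with $u=c_f^\top\xi$ from \eqref{pcsl1} yields the algebraic identity
\begequ
\lab{thetax}
\theta^\top x = e_n^\top A_m x + c_f^\top \xi - \dot x_n .
\endequ
Here strict properness of $F$ is precisely what ensures that $u=c_f^\top\xi$ carries no direct $\theta^\top x$ feedthrough, so that \eqref{thetax} closes without an algebraic loop.

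Substituting \eqref{thetax} into \eqref{pcsl1} trades the $\theta$-dependent forcing for $\theta$-free signals at the cost of a derivative term $-b_f\dot x_n$. This is the crux, and it is resolved by absorbing that term into a redefined state:
\begequ
\lab{trafo}
\chi = \xi + b_f e_n^\top x ,
\endequ
that is, $T(\theta)=[\,b_f e_n^\top\ \ I_{n_\xi}\,]$, which turns out to be \emph{independent} of $\theta$. A direct computation, using $\dot\chi=\dot\xi+b_f\dot x_n$ and $\xi=\chi-b_f e_n^\top x$, then gives
\begequ
\lab{chidyn}
\dot\chi = (A_f + b_f c_f^\top)\chi - (A_f + b_f c_f^\top)b_f e_n^\top x + b_f e_n^\top A_m x + b_f r ,
\endequ
together with the output equation $u = c_f^\top\chi - (c_f^\top b_f)\,e_n^\top x$. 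Both expressions involve only the filter data $(A_f,b_f,c_f)$, the known matrix $A_m$, and the measured signals $x,r$; they are manifestly free of $\theta$, which establishes condition (i).

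Condition (ii) is then immediate: \eqref{trafo} is a pure change of variables and \eqref{thetax} is an exact identity implied by the plant, so the closed-loop interconnection is untouched and $P_{cl}(s,\theta)$ is preserved verbatim. The only step I expect to demand care is the bookkeeping from \eqref{thetax} to \eqref{chidyn}, namely checking that after the substitution $\xi=\chi-b_f e_n^\top x$ every residual $\theta$-dependent term indeed cancels; the conceptual obstacle, rather than this routine algebra, is recognizing that the spurious $\dot x_n$ produced by \eqref{thetax} is a total derivative and hence can be folded into the new state \eqref{trafo}.
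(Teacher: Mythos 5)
Your proposal is correct and follows essentially the same route as the paper: your transformation $\chi=\xi+b_f e_n^\top x$ is exactly the paper's $T=[\,b_f b^\top \;|\; I_{n_\xi}\,]$ (since $b=e_n$), and the cancellation you perform at the signal level is precisely the paper's matrix identity $-b_f b^\top b\,\theta^\top+b_f\theta^\top=0$ carried out via the similarity transformation ${\cal T}A_{cl}(\theta){\cal T}^{-1}$. Your derivation of $T$ from the plant's $n$-th row and your appeal to invariance of transfer functions under (state-preserving) similarity for condition (ii) are both sound and match the paper's conclusions.
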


\begin{proof}\em
The fact that the static state--feedback \eqref{pcsmrc} is not a PIRC is obvious, as there is no way it can be implemented without knowledge of $\theta$.

We will prove now that applying to  \eqref{pcsl1} the partial coordinate transformation \eqref{chacoo}, with\footnote{Notice that the proposed $T$ is independent of $\theta$.}
$$
T=\lef[{ccc} b_f b^\top & | & I_{n_\xi}\rig],
$$
verifies the conditions (i) and (ii) of Section \ref{sec1}. First, we compute the transfer matrix of the plant \eqref{sysx} in closed--loop with  \eqref{pcsl1} as
$$
P_{cl}(s,\theta)=\lef[{cc} I_{n} &  0 \rig][sI_{(n+n_{\xi})}-A_{cl}(\theta)]^{-1}\lef[{c} 0 \\ b_c\rig],
$$
where
$$
A_{cl}(\theta)=\lef[{cc} A_m -b\theta^\top & bc_f^\top \\ b_f\theta^\top & A_f \rig],
$$
is the closed--loop system matrix. Now, the closed--loop system in the coordinates $\col(x,\chi)$ takes the form
\begequ
\lab{cloloo1}
\lef[{c} \dot x \\  \dot \chi \rig]={\cal T}A_{cl}(\theta){\cal T}^{-1}\lef[{c} x \\  \chi \rig]+ {\cal T}\lef[{c} 0 \\ b_c\rig]r
\endequ
where
$$
{\cal T}=\lef[{cc} I_n & 0 \\  b_f b^\top &  I_{n_\xi} \rig]
$$
is the full--coordinate transformation matrix. Now, the transfer matrix of  \eqref{sysx} in closed--loop with \eqref{cloloo1} verifies
\begequarrs
&&\lef[{cc} I_{n} &  0 \rig][sI_{(n+n_{\xi})}-{\cal T}A_{cl}(\theta){\cal T}^{-1}]^{-1}{\cal T}\lef[{c} 0 \\ b_c\rig]=\\
&&\lef[{cc} I_{n} &  0 \rig]{\cal T}[sI_{(n+n_{\xi})}-A_{cl}(\theta)]^{-1}\lef[{c} 0 \\ b_c\rig]=P_{cl}(s,\theta),
\endequarrs
proving that the closed--loop transfer matrix is invariant to the change of coordinates.

To complete the proof it remains to show that the controller dynamics in \eqref{cloloo1} is independent of $\theta$. Towards this end we compute
\begequarrs
{\cal T}A_{cl}(\theta){\cal T}^{-1}&=&\lef[{cc} I_n & 0 \\  b_f b^\top &  I_{n_\xi} \rig]\lef[{cc} A_m -b\theta^\top & bc_f^\top \\ b_f\theta^\top & A_f \rig] \lef[{cc} I_n & 0 \\  -b_f b^\top &
I_{n_\xi} \rig]\\
&=&\lef[{cc} I_n & 0 \\  b_f b^\top &  I_{n_\xi} \rig]\lef[{cc} A_m -b\theta^\top - bc_f^\top b_f b^\top & bc_f^\top \\ b_f\theta^\top -  A_fb_f b^\top & A_f\rig].
\endequarrs
The $\theta$--dependent term in the dynamics of $\chi$ is
$$
-b_fb^\top b\theta^\top+b_f\theta^\top,
$$
which is equal to zero because $b^\top b=1$.
\end{proof}
%
\section{Output--feedback Model Reference Control}
\lab{sec3}
%
The classical problem of output--feedback MRC deals with the single--input single--output LTI plant
\begequ
\lab{sys}
D(p)y=N(p)u
\endequ
where $y,\;u \in \rea$,
$$
D(p)  = \sum_{i=0}^{n}d_i p^i,\; N(p)  = \sum_{i=0}^{m}n_i p^i
$$
with the relative degree
$$
d := n - m \geq 1,
$$
and $D(p)$ and $N(p)$ are coprime---whose coefficients are unknown in MRAC.

We make the following assumptions
\begenu
\item[A.1] $n$ and $d$ are known.
\item[A.2] $n_m=1$.
\endenu
While A.1 is a classical assumption in MRC, we remark that A.2 pertains to the high--frequency gain of the plant, which is assumed here to be equal to one. This assumption is made, without loss of
generality, to simplify the notation.

The MRC objective is to asymptotically drive to zero the tracking error
\begequ
\lab{e}
e = y - { 1 \over D_m(p)} r
\endequ
where
$$
D_m(p)=\sum_{i=0}^{d}d_{mi} p^i
$$
is a Hurwitz polynomial and $r\in\rea$ is a bounded reference. Consistent with Assumption A.2 we take $d_{md}=1$.

Instrumental for the development of MRC is the lemma below, known as the direct control model reference parameterization, first established by Monopoli \cite{MON} and Astrom and Wittenmark
\cite{ASTWIT}, see also \cite{SASBOD} for a modern derivation of the result.

\begin{lemma}\em
Consider the plant (\ref{sys}) and the tracking error (\ref{e}). There exists a vector $\theta \in \rea^{2n}$ such that
$$
D_m(p)e =  u - \theta^\top  \phi + \epsilon_t
$$
where $\epsilon_t$ is an exponentially decaying term due to initial conditions,\footnote{These term will be omitted (without loss of generality) in the sequel.} and $\phi \in \rea^{2n}$ is the
regressor vector given by
\begequ
\lab{phi}
\phi={1 \over \lambda(p)}[u,\dot u, \dots, u^{(n-2)}, y,\dot y, \dots, y^{(n-2)}, \l(p) y, \l(p) r]^\top
\endequ
with
$$
\lambda(p)=\Sigma_{i=0}^{n-1}\lambda_i p^i,\;\lambda_{n-1}=1,
$$
a designer chosen Hurwitz polynomial.
\end{lemma}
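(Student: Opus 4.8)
The plan is to convert the claimed operator identity into a polynomial (Bezout) identity whose solvability is secured by the coprimality of $D(p)$ and $N(p)$, and then to isolate the initial--condition transient as the exponentially decaying term.

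First I would expand the inner product $\theta^\top\phi$ using the definition \eqref{phi}. Regrouping the $2n$ entries of $\theta$ into two polynomials and two scalars, one writes
\[
\theta^\top\phi=\frac{\alpha(p)}{\lambda(p)}u+\frac{\beta(p)}{\lambda(p)}y+g\,y+h\,r,
\]
where $\deg\alpha,\deg\beta\le n-2$ and $g,h\in\rea$; this is a bijection between $\theta\in\rea^{2n}$ and the coefficients of $(\alpha,\beta,g,h)$. Substituting $D_m(p)e=D_m(p)y-r$ and matching the term carrying $r$ immediately forces $h=1$.

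The remaining identity to enforce, $D_m(p)y=u-\frac{\alpha}{\lambda}u-\frac{\beta}{\lambda}y-gy$, I would clear of the filter $\lambda(p)$ and then eliminate $u$ through the plant relation $N(p)u=D(p)y$. This reduces the whole statement to the single polynomial identity
\[
D(p)\big(\lambda(p)-\alpha(p)\big)-N(p)\big(\beta(p)+g\lambda(p)\big)=N(p)\lambda(p)D_m(p).
\]
Solving this for $(\alpha,\beta,g)$ is the crux of the argument. Both sides have degree $2n-1$; since $D$ is monic (Assumption A.2 normalizes the high--frequency gain to one) while $N(\beta+g\lambda)$ has degree at most $2n-2$, the leading--coefficient equation holds automatically, leaving a square system in the $2n-1$ unknown coefficients. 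I expect the main obstacle to be the degree bookkeeping, namely verifying that the solution lands in the prescribed space, i.e. that $\alpha$ and $\beta$ have degree at most $n-2$. The clean route is to observe that coprimality forces $N\mid(\lambda-\alpha)$, to write $\lambda-\alpha=N(p)S(p)$ with $\deg S=d-1$, and to reduce the identity to $D(p)S(p)-\lambda(p)D_m(p)=\beta(p)+g\lambda(p)$; the requirement that the left--hand side drop to degree $n-1$ is a triangular system for the coefficients of $S$ with unit diagonal (again because $D$ is monic), hence uniquely solvable, after which $g$ and $\beta$ are read off as the degree--$(n-1)$ coefficient and the lower part, respectively.

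Finally I would account for $\epsilon_t$. The manipulations above are exact at the level of transfer functions, with the factor $1/N(p)$ cancelling, so that no minimum--phase assumption on the plant is needed. For nonzero initial conditions the only discrepancy between the realized signals entering $\phi$ and $e$ and these operator identities is the homogeneous response of the filters $1/\lambda(p)$ and $1/D_m(p)$; because $\lambda$ and $D_m$ are Hurwitz, this response decays exponentially and is precisely the term $\epsilon_t$.
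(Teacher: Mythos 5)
The paper does not actually prove this lemma: it is quoted as the classical direct model--reference parameterization and attributed to Monopoli, \AA{}str\"om--Wittenmark and Sastry--Bodson, so there is no in--paper argument to compare against. Your derivation is correct and is, in substance, the standard proof from those references: grouping $\theta^\top\phi$ into $\frac{\alpha}{\lambda}u+\frac{\beta}{\lambda}y+gy+hr$ with $\deg\alpha,\deg\beta\le n-2$ accounts for exactly $2n$ parameters, matching the $r$--channel gives $h=1$, and clearing $\lambda$ and eliminating $u$ via $D y=N u$ yields the Diophantine identity $D(\lambda-\alpha)-N(\beta+g\lambda)=N\lambda D_m$. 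Your resolution of it---coprimality forces $\lambda-\alpha=N S$ with $S$ monic of degree $d-1$, after which $S$ and $-(\beta+g\lambda)$ are just the quotient and remainder of dividing $\lambda D_m$ by $D$ (your triangular unit--diagonal system is exactly this division)---is the clean classical route, and your closing observations are the two points that matter: $1/N$ is never inverted on signals, so no minimum--phase assumption enters, and $\epsilon_t$ is precisely the homogeneous response of the Hurwitz filters $1/\lambda$ and $1/D_m$. One small imprecision: Assumption A.2 literally normalizes the leading coefficient of $N$, not of $D$; you additionally need the usual (harmless) normalization $d_n=1$ of the denominator for your leading--coefficient cancellations, which is what the paper implicitly means by ``high--frequency gain equal to one.''
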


Classical MRC consists of a state realization of the transfer matrix \eqref{phi} and the control signal
\begequ
\lab{umrc}
u = \theta^\top  \phi.
\endequ
We consider also adding a filter at the input of the plant, hence compute the control input via
\begequ
\lab{outfee}
u=F(p)(\theta^\top \phi).
\endequ
We will refer to this scheme as  filtered output--feedback MRC.
\begin{proposition}\em
\lab{pro2}
Consider the plant  (\ref{sys}) and the tracking error (\ref{e}).
\begite
\item[(i)] The output--feedback MRC \eqref{phi}, \eqref{umrc} is not a PIRC.
\item[(ii)] The filtered output--feedback MRC   \eqref{fil}, \eqref{phi}, \eqref{outfee} is a PIRC for any filter $F(p)$ verifying the conditions:
\begenu
\item[C1] $n_{N_f} \geq n_{D_f}+d$.
\item[C2] $D_f(s)-N_f(s)$ is a stable polynomial.\\
\endenu
\endite
\end{proposition}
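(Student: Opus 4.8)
The plan is to handle the two claims with a single tool—the direct control parameterization of the Lemma—which converts the $\theta$--dependent signal $\theta^\top\phi$ into the $\theta$--free expression $u-D_m(p)y+r$. Indeed, from the tracking error \eqref{e} one has $D_m(p)e=D_m(p)y-r$, so the Lemma (dropping $\epsilon_t$) reads $\theta^\top\phi=u-D_m(p)y+r$, in which no plant parameter appears.

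For part (i) I would follow the static state--feedback template. Substituting this identity into the classical law \eqref{umrc} collapses $u=\theta^\top\phi$ into the degenerate relation $D_m(p)y=r$, which is merely the control objective and defines no controller; equivalently, the $\theta$--elimination that works below for the filtered scheme requires $1-F(p)$ to be invertible, whereas here $F\equiv 1$. Hence $u=\theta^\top\phi$ admits no $\theta$--free realization and classical output--feedback MRC is not a PIRC.

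For part (ii) the crux is the same substitution applied to the filtered law \eqref{outfee}. Writing $u=F(p)\theta^\top\phi$ and replacing $\theta^\top\phi$ by $u-D_m(p)y+r$ yields, after multiplying by $D_f(p)$, the manifestly $\theta$--free relation
\begequ
\lab{thetafree}
(D_f(p)-N_f(p))\,u=N_f(p)\,\big(r-D_m(p)y\big).
\endequ
I would then check the two PIRC requirements directly. Condition (ii) holds because \eqref{thetafree} is an \emph{identity along the closed--loop trajectories}, obtained from \eqref{outfee} by an exact substitution; therefore the interconnection with the plant \eqref{sys}, and with it $P_{cl}(s,\theta)$, is unchanged by passing to the representation \eqref{thetafree}. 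Condition (i) is immediate, since every coefficient appearing in \eqref{thetafree} belongs to the designer--chosen polynomials $N_f,D_f,D_m$ and is independent of $\theta$; the partial transformation $T(\theta)$ of \eqref{chacoo} is the device that exhibits this $\theta$--free controller block inside the combined plant--controller state.

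It remains to confirm that \eqref{thetafree} is a \emph{bona fide} realization, and this is precisely what C1 and C2 secure. Condition C2 makes $D_f(p)-N_f(p)$ Hurwitz, so the controller \eqref{thetafree} is internally stable. Condition C1, together with Assumption A.2, governs the degree bookkeeping: it forces $n_{N_f}>n_{D_f}$, so $\deg(D_f-N_f)=n_{N_f}$ and the map $y\mapsto u$ in \eqref{thetafree} has relative degree $-d$; this negative excess is cancelled exactly by the relative degree $d$ of the plant \eqref{sys}, making the loop gain biproper and the interconnection well posed. I expect the main obstacle to be this last, purely technical step: one must track the top--degree coefficients of the closed--loop characteristic polynomial $D(D_f-N_f)+N_fND_m$ to pin down the closed--loop order—and hence the dimension $n_\xi$ demanded of $T(\theta)$—and verify that C1 introduces no spurious non--proper or unstable behaviour. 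The substitution via the Lemma is the conceptual heart; the degree accounting enforced by C1 is where the care is needed.
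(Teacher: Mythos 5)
Your elimination of $\theta^\top\phi$ via the direct model reference parameterization is exactly the paper's argument: from $D_m(p)e = u - \theta^\top\phi$ and $u = F(p)(\theta^\top\phi)$ one eliminates the parameter--dependent signal and obtains the $\theta$--free controller $(D_f-N_f)u = N_f(r - D_m y)$, which the paper writes equivalently as the transfer function $H_{eu}(s) = \frac{N_f D_m}{N_f - D_f}(s)$ from $e$ to $u$, to be checked for properness and stability. Your treatment of (i) (the elimination degenerates when $F\equiv 1$) is an acceptable variant of the paper's observation that the only $\theta$--dependent object left is a static output map, with no controller dynamics in which to absorb it.

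The gap is in the final degree--counting step, which you yourself flagged as the delicate point. The paper's criterion for a PIRC here is that $H_{eu}$ be \emph{proper} and stable, so that the parameter--free controller admits a state realization without signal differentiation. Properness of $N_f D_m/(N_f - D_f)$ requires $\deg(N_f - D_f) \geq n_{N_f} + d$, i.e.\ $n_{D_f} \geq n_{N_f} + d$ --- the inequality of C1 with the subscripts transposed. The printed C1 is evidently a typo: taken literally it contradicts the strict properness $n_{D_f} > n_{N_f}$ stipulated in \eqref{fil}, which is part of the proposition's hypotheses. You instead take the printed inequality at face value, deduce $n_{N_f} > n_{D_f}$, and arrive at a controller map $y \mapsto u$ of relative degree $-d$, which you then defend by arguing that the loop gain is biproper. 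That does not establish the claim: an improper controller has no state realization, so no partial change of coordinates \eqref{chacoo} can exhibit a parameter--independent $\dot\chi$ equation, and implementing the law would require $d$ differentiations of $y$ --- precisely what the PIRC notion is meant to exclude (and what the paper's proof explicitly rules out). So the step you identified as ``where the care is needed'' is indeed where your argument fails. With the corrected inequality $n_{D_f} \geq n_{N_f} + d$, your relation $(D_f - N_f)u = N_f(r - D_m y)$ is a proper, stable (by C2, since $D_f-N_f$ and $N_f-D_f$ have the same roots), $\theta$--free controller of order $n_{D_f}$, and the proof closes exactly as in the paper; the closed--loop--invariance part then follows as in Proposition \ref{pro1} rather than needing a separate trajectory argument.
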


\begin{proof}\em
The key question is whether we can eliminate the {\em parameter--dependent term} $\theta^\top \phi$ from the controller equations. Since the part of the controller dynamics that generates $\phi$,
that is \eqref{phi}, is independent of the parameters it plays no role in the analysis. In output--feedback MRC there is no remaining dynamics, but only the output map  \eqref{umrc}. The proof of
(i) follows then immediately.

To establish the proof of claim (ii) we analyze the error equations
\begequarr
\nonumber D_m(p)e & = &  u - \theta^\top  \phi \\
u & = & F(p)(\theta^\top \phi).
\lab{keyequ0}
\endequarr
The question then reduces to the analysis of the transfer function from $e$ to $u$ that results after eliminating $\theta^\top \phi$ from \eqref{keyequ0}. If this transfer function, denoted
$H_{eu}(s)$, is stable and proper the controller can be implemented without knowledge of $\theta$---nor signal differentiation---and the scheme is a PIRC. The transfer function of interest is
\begequ
\lab{heu}
H_{eu}(s)={N_fD_m \over N_f-D_f}(s),
\endequ
where we have used \eqref{fil}. Finally, claim (ii) follows noting that the conditions C1 and C2 ensure that the transfer function \eqref{heu} is proper and stable.
\end{proof}
%
\section{Concluding Remarks}
\lab{sec4}
%
It has been argued in the paper that it is unnecessary to add adaptation to a PIRC. The basic premise that justifies this statement is that the control action of the ideal, known--parameter
controller can be generated without knowledge of the parameters! However, against this conclusion, the following questions may be raised.
\begenu
\item Is the set of plants that can be stabilized with the adaptive version of the controller larger than the set stabilized by its LTI, parameter--independent realization?
\item Under which conditions  the adaptive controller actually converges to its LTI, parameter--independent realization?
\endenu
Answers to these questions have been provided for the ${\cal L}_1$--AC architecture in \cite{ortpan} and \cite{ortpancsm} for the case of $r=0$ and a first order filter. In particular, it has been
shown that the answer to the first question above is {\em negative}. This fact is established showing that if the characteristic polynomial of the closed--loop transfer matrix $P_{cl}(s,\theta)$ is
not Hurwitz then the  ${\cal L}_1$--AC, with a a state predictor--based estimator, generates unbounded trajectories. Furthermore, it has been shown that, if  the  ${\cal L}_1$--AC ensures
boundedness of trajectories, then it {\em always} converges  to its LTI, parameter--independent realization. This fact is proven writing the ${\cal L}_1$--AC in the form
\begequarrs
\dot \xi &= &A_f \xi + b_f (\theta^\top x + r)+ b_f \tilde \theta^\top x\\
u&=& c_f^\top \xi,
\endequarrs
where $\tilde \theta= \hat\theta -\theta$ is the parameter error. Then, it is shown that, if the trajectories are bounded, we always have
$$
\lim_{t \to \infty}|\tilde \theta^\top(t)x(t)|=0.
$$

As a final remark we bring to the readers attention the fact that it is not possible to establish similar results for filtered output--feedback MRC. Indeed, there are systems that can be stabilized
with MRAC that cannot be stabilized with an LTI controller. Also, there are scenarios where trajectories of the MRAC are bounded but the perturbation term $\tilde \theta^\top \phi$ does not converge
to zero.
%
\section*{Acknowledgment}
The first author thanks Alban Quadrat for a useful discussion on elimination theory of algebro--differential systems.
%

\end{document}